\newtheorem{thm}{Theorem}[section]
\newtheorem{lem}{Lemma}[section]
\newenvironment{proof}[1][Proof]{\noindent\textbf{#1.}}{\ \rule{0.5em}{0.5em}}
\begin{document}
\bibliographystyle{abbrv}
\title{Addendum to ``Travelling waves for a non-local Korteweg-de Vries-Burgers equation'' [J. Differential Equations 257 (2014), no. 3, 720--758]
}
\author{
F. Achleitner\footnote{Vienna University of Technology, 
Institute for Analysis and Scientific Computing, Wiedner Hauptstrasse 8-10, 1040 Wien, Austria.
E-mail: franz.achleitner@tuwien.ac.at
},
C.~M. Cuesta\footnote{University of the Basque Country (UPV/EHU), Faculty of Science and Technology, Department of Mathematics, Aptdo. 644, 48080 Bilbao, Spain. E-mail: carlotamaria.cuesta@ehu.es
}
}

\date{}
\maketitle
\begin{abstract}
We add a theorem to [J. Differential Equations 257 (2014), no. 3, 720--758] by F. Achleitner, C.M. Cuesta and S. Hittmeir. In that paper we studied travelling wave solutions of a Korteweg-de Vries-Burgers type equation with a non-local diffusion term. In particular, the proof of existence and uniqueness of these waves relies on the assumption that the exponentially decaying functions are the only bounded solutions of the linearised equation. In this addendum we prove this assumption and thus close the existence and uniqueness proof of travelling wave solutions.
\end{abstract}

\emph{Keywords.} non-local evolution equation, fractional derivative, travelling waves

\emph{Math.Subj.Class.} 47J35, 26A33, 35C07

\section{Introduction}
In \cite{ACH} we study the existence and stability of travelling waves of the following one-dimensional evolution equation:
\begin{equation}\label{EE}
  \partial_t u + \partial_x u^2 = \partial_x {\cal D}^{\alpha} u +\tau \partial_x^3 u\,, \quad x\in \mathbb{R}\,, \ t\geq 0
\end{equation}
with $\tau >0$, see also \cite{AHS} for the case $\tau=0$. The symbol ${\cal D}^\alpha$ denotes the non-local operator acting on $x$ that, applied to a general function $f:\mathbb{R}\rightarrow\mathbb{R}$, reads
\begin{equation}\label{FD}
{\cal D}^\alpha f(x)=d_\alpha\int_{-\infty}^x \frac{f'(y)}{(x-y)^{\alpha}}dy \,,
\quad \mbox{with} \quad 
0<\alpha<1\,, \quad d_\alpha := \frac{1}{\Gamma(1-\alpha)}>0 \,,
\end{equation}
here $\Gamma$ denotes the Gamma function.

We recall that travelling wave solutions of (\ref{EE}) are solutions of the form $u(x,t)=\phi(\xi)$ with $\xi=x-ct$ and $c\in\mathbb{R}$, that satisfy
\begin{equation}\label{TWP}
h(\phi)= {\cal D}^\alpha \phi+\tau\phi''\,,
\quad \mbox{where} \quad h(\phi) := -c(\phi-\phi_-)+\phi^2-\phi_-^2 \,.
\end{equation}
and
\begin{equation}\label{far-field}
\lim_{\xi\to-\infty}\phi(\xi) = \phi_-\,,\quad \lim_{\xi\to \infty}\phi(\xi) = \phi_+ \,
\end{equation}
(see \cite{ACH} for details) for some constant values $\phi_-$ and $\phi_+$. Here $'$ denotes differentiation with respect to $\xi$. 
Further, it is assumed that $\phi_->\phi_+$ (Lax entropy condition), which implies that $c=\phi_++\phi_-$ (Rankine-Hugoniot wave speed), and also that $h'(\phi_-)=\phi_--\phi_+>0$.

The proof of existence of travelling wave solutions, for both $\tau>0$ and $\tau=0$, relies on the assumption that the functions $v(\xi) = C e^{\lambda\xi}$, $C\in\mathbb{R}$, are the only bounded solutions of the linearised equation
\begin{equation} \label{LTW}
  h'(\phi_-)v={\cal D}^\alpha v + \tau v''\,,
\end{equation}
where the exponent $\lambda>0$ is the real and strictly positive zero of
\begin{equation}\label{pol:left}
P(z)=\tau z^2+z^\alpha-h'(\phi_-)\,.
\end{equation}
We recall that for $\tau>0$ there is a unique positive real zero of (\ref{pol:left}), the other zeros being two complex conjugates with negative real part, see  \cite{ACH}.

In \cite{ACH} we do not give a complete proof of this assumption, however, we prove it in suitable weighted exponential spaces. We show this by writing the equation as a Wiener-Hopf equation (\cite{Wiener+Hopf:1931}) and applying the results by \cite{Krein:1958}. Namely, we show that if 
$0<\mu<\min\{\lambda,h'(\phi_-)/(2-\alpha)\}$, then, all solutions of (\ref{LTW}) that are in the space
 \[
 L^\infty_w(-\infty,0) = \{f \in L^\infty(-\infty,0): \ f(\xi)=e^{\mu \xi}g(\xi) \quad \mbox{for some } \quad g \in L^\infty(-\infty,0)\}
\]
are given by the one-parameter family $\{Ce^{\lambda \xi} :\ C\in \mathbb{R}\}$. A similar result is given in \cite{AHS} for the case $\tau=0$, 
where it is also shown that bounded solutions decay to $0$ as $\xi\to-\infty$ faster than algebraically.

The aim of the current addendum is thus to present an alternative proof that removes the weight of the space. Namely, we show that:
\begin{thm}\label{uniqueness}
All solutions of (\ref{LTW}) with $\tau\geq 0$ that are in $H^s(-\infty,0)$ with $s\geq 2$ are given by the one-parameter family $\{\xi\in(-\infty,0)\rightarrow Ce^{\lambda \xi} :\ C\in \mathbb{R}\}$, where $\lambda$ is the positive zero of (\ref{pol:left}). 
\end{thm}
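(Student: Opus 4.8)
The plan is to recast (\ref{LTW}) as a Wiener--Hopf (convolution) equation on the whole line whose right-hand side is supported on $[0,\infty)$, and then to exploit, via the Fourier transform, the analyticity forced by these one-sided supports together with the precise location of the zeros and of the branch cut of the associated symbol. \emph{Step 1.} Given a solution $v\in H^s(-\infty,0)$, $s\ge 2$, of (\ref{LTW}), extend it to $\tilde v\in H^s(\mathbb{R})$ with $\operatorname{supp}\tilde v\subseteq(-\infty,1]$. Since $\mathcal{D}^\alpha$ is a convolution against a kernel supported on $[0,\infty)$, we have $\mathcal{D}^\alpha\tilde v(\xi)=\mathcal{D}^\alpha v(\xi)$ for $\xi<0$, so
\[
 R:=h'(\phi_-)\tilde v-\mathcal{D}^\alpha\tilde v-\tau\tilde v''
\]
vanishes on $(-\infty,0)$ by (\ref{LTW}); hence $\operatorname{supp}R\subseteq[0,\infty)$. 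On the Fourier side $\mathcal{D}^\alpha$ acts as multiplication by $(i\omega)^\alpha$ (a consequence of $\widehat{t^{-\alpha}\mathbf 1_{\{t>0\}}}(\omega)=\Gamma(1-\alpha)(i\omega)^{\alpha-1}$, with the branch of $w\mapsto w^\alpha$ analytic off $(-\infty,0]$), whence $\mathcal{D}^\alpha\tilde v\in H^{s-\alpha}(\mathbb{R})$ and $\tilde v''\in H^{s-2}(\mathbb{R})$, so that $R\in L^2(\mathbb{R})$ --- this is where the hypothesis $s\ge 2$ is used. Taking Fourier transforms yields
\[
 m(\omega)\,\widehat{\tilde v}(\omega)=\widehat R(\omega),\qquad m(\omega):=h'(\phi_-)-(i\omega)^\alpha+\tau\omega^2=-P(i\omega).
\]

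\emph{Step 2.} Because $R\in L^2(\mathbb{R})$ with $\operatorname{supp}R\subseteq[0,\infty)$, the Paley--Wiener theorem shows that $\widehat R$ is the boundary value of a function holomorphic in the lower half-plane $\{\operatorname{Im}\omega<0\}$, of Hardy class there. The branch cut of $(i\omega)^\alpha$ is the half-line $i[0,\infty)$, which does not meet $\{\operatorname{Im}\omega\le 0\}$; thus $m$ is holomorphic in $\{\operatorname{Im}\omega<0\}$, continuous up to $\mathbb{R}$, and nonvanishing on $\mathbb{R}$ --- a zero of $m$ on $\mathbb{R}$ would force $i\omega$ to be a purely imaginary zero of $P$, whereas by \cite{ACH} the zeros of $P$ are $\lambda>0$ and, for $\tau>0$, a conjugate pair with strictly negative real part. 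In the open lower half-plane the only zero of $m$ is $\omega_0:=-i\lambda$, and it is simple since $P'(\lambda)\neq 0$. Consequently $\widehat{\tilde v}=\widehat R/m$ continues to a function meromorphic in $\{\operatorname{Im}\omega<0\}$ with at most a simple pole at $\omega_0$, with $L^2$ boundary values on $\mathbb{R}$. On the other hand, $\operatorname{supp}\tilde v\subseteq(-\infty,1]$ means that $e^{i\omega}\widehat{\tilde v}(\omega)$ is the Fourier transform of an $L^2$ function supported on $(-\infty,0]$, hence of Hardy class in the upper half-plane; so $\widehat{\tilde v}$ also extends holomorphically to $\{\operatorname{Im}\omega>0\}$. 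The two extensions have the same nontangential boundary values on $\mathbb{R}$ (namely the $L^2$ function $\widehat{\tilde v}|_{\mathbb{R}}$), so by a classical edge-of-the-wedge argument they patch together to a single function $F$, meromorphic on all of $\mathbb{C}$, whose only possible singularity is the simple pole at $\omega_0=-i\lambda$.

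\emph{Step 3.} The function $u(\omega):=(\omega+i\lambda)F(\omega)$ is entire, and $u\in L^2(\mathbb{R})$ because $\omega\mapsto\omega\,\widehat{\tilde v}(\omega)\in L^2$ (as $s\ge 1$). In the upper half-plane $u(\omega)=(\omega+i\lambda)e^{-i\omega}g(\omega)$ with $g$ of Hardy class there, so $u$ is of exponential type at most $1$ in $\{\operatorname{Im}\omega>0\}$; in the lower half-plane $u(\omega)=(\omega+i\lambda)\widehat R(\omega)/m(\omega)$, and since $|m(\omega)|$ is bounded below by a constant multiple of $|\omega|^\alpha$ for large $|\omega|$ while $\widehat R$ is bounded on each horizontal line below $\mathbb{R}$, $u$ grows there only polynomially (the mild blow-up of the Hardy bounds as one approaches $\mathbb{R}$ being absorbed using $u|_{\mathbb{R}}\in L^2$ and the Phragm\'en--Lindel\"of principle). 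By the Paley--Wiener theorem, $u=\widehat\psi$ for some $\psi\in L^2(\mathbb{R})$ with $\operatorname{supp}\psi\subseteq[0,1]$. Since $1/(\omega+i\lambda)$ is the Fourier transform of $\xi\mapsto-i\,e^{\lambda\xi}\mathbf 1_{(-\infty,0)}(\xi)$, we obtain $\tilde v=-i\,\psi*\bigl(e^{\lambda\,\cdot}\mathbf 1_{(-\infty,0)}\bigr)$; for $\xi<0$ the support of $\psi$ lies entirely in $(\xi,\infty)$, so the convolution collapses and $\tilde v(\xi)=e^{\lambda\xi}\bigl(-i\int_0^1\psi(\sigma)e^{-\lambda\sigma}\,d\sigma\bigr)$. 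As $v=\tilde v$ on $(-\infty,0)$ and $v$ is real-valued, this gives $v(\xi)=Ce^{\lambda\xi}$ for some $C\in\mathbb{R}$, as claimed.

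The heart of the matter is Step 2: determining exactly the domain of holomorphy of the symbol $m$ --- in particular verifying that the branch cut of $(i\omega)^\alpha$ avoids the closed lower half-plane and that $m$ possesses there precisely one zero, a simple one at $-i\lambda$ --- and then gluing the lower- and upper-half-plane continuations of $\widehat{\tilde v}$. What makes the Hardy-space and Paley--Wiener machinery of Steps 2 and 3 available is the observation in Step 1 that the Wiener--Hopf datum $R$ belongs genuinely to $L^2(\mathbb{R})$ rather than merely to the space of tempered distributions, and it is exactly this that forces the requirement $s\ge 2$.
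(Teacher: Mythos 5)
Your argument is essentially correct, but it takes a genuinely different route from the addendum. The paper's proof is an energy argument: it tests (\ref{LTW}) with $v'$ and shows that the quadratic form $I[v]=\int_{-\infty}^0 v'\,\mathcal{D}^\alpha v\,d\xi$ is non-negative and vanishes only for $v\equiv 0$, using the Lieb--Loss representation $|\xi-y|^{-\alpha}=\int_0^\infty t^\alpha\int_{\mathbb{R}}h(t(z-\xi))h(t(z-y))\,dz\,dt$ to write $I[v]$ as an integral of squares; uniqueness for given data at $\xi=0$ then follows in two lines, and the exponentials account for the whole family. Your proof instead runs the classical Wiener--Hopf/Paley--Wiener machine in unweighted $L^2$: one-sided supports give holomorphy of $\widehat R$ and $e^{i\omega}\widehat{\tilde v}$ in complementary half-planes, the zero count of $P$ in $\{\operatorname{Re}z\ge 0\}$ localises the singularity of $\widehat{\tilde v}/\,$, and an entire-function argument forces $v=Ce^{\lambda\xi}$. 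This is in spirit the unweighted version of the argument the authors themselves gave in \cite{ACH} via Kre\u{\i}n's theory, and it buys more: it derives the exponential form of the solution rather than merely verifying it, and it makes transparent where the hypothesis $s\ge 2$ and the location of the zeros of $P$ enter. The price is that the two most delicate steps are only sketched: the gluing of the two Hardy-class continuations across $\mathbb{R}$ from $L^2$ boundary data, and the Phragm\'en--Lindel\"of/Smirnov-class step needed to absorb the $|{\operatorname{Im}\omega}|^{-1/2}$ blow-up of the Hardy bounds before invoking Paley--Wiener; both are standard but should be carried out (e.g.\ via the ``bounded type with non-positive mean type plus $L^2$ boundary values implies Hardy class'' theorem). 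You should also record explicitly that $P$ has no purely imaginary zeros and exactly the simple zero $\lambda$ in the closed right half-plane for \emph{both} $\tau>0$ (quoted from \cite{ACH}) and $\tau=0$ (where $\operatorname{Re}(i\omega)^\alpha\ge|\omega|^\alpha\cos(\alpha\pi/2)>0$ gives it directly), since the whole argument hinges on this. By contrast, the paper's energy proof needs none of this complex analysis and treats $\tau\ge 0$ uniformly in a few lines.
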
		

The proof is very easy once one realises that the integral
\begin{equation}\label{key}
I[v]:=\int_{-\infty}^0\int_{-\infty}^\xi \frac{v'(\xi)v'(y)}{(\xi-y)^{\alpha}}dyd\xi 
\end{equation}
is non-negative. The proof of this fact is adapted from \cite{LiebLoss}.

Before we give the proof of Theorem~\ref{uniqueness}, let us recall some notation and properties of (\ref{FD}). 
For $s\geq 0$ we shall adopt the following notation for the Sobolev space of square integrable 
functions,
\[
H^s(\mathbb{R}) := \{ u:\,\|u\|_{H^s(\mathbb{R})} <\infty \} \,,\qquad \|u\|_{H^s(\mathbb{R})} := \|(1+|k|^2)^{s/2}\hat{u}\|_{L^2(\mathbb{R})} \,,
\]
and the corresponding homogeneous norm $\|u\|_{\dot H^s(\mathbb{R})} := \| |k|^s\hat{u} \|_{L^2(\mathbb{R})}$.
It is easy to see that $\|{\cal D}^\alpha u\|_{\dot H^s(\mathbb{R})} = \|u\|_{\dot H^{s+\alpha}(\mathbb{R})}$, so that ${\cal D}^\alpha$ is a bounded linear operator from $H^s(\mathbb{R})$ to $H^{s-\alpha}(\mathbb{R})$.


We recall that the analysis in \cite{ACH} starts out by proving a 'local' existence result on $(-\infty,\xi_0]$ with $\xi_0<0$ and $|\xi_0|$ sufficiently
 large. This proof is based on linearisation about $\xi=-\infty$ (or, equivalently, $\phi=\phi_-$), which is given by (\ref{LTW}). It is then assumed that 
\begin{equation} \label{lin-ass} 
\mathcal{N}\big( \tau\partial_\xi^2+ {\cal D}^\alpha- h'(\phi_-) \mbox{Id}\big)
= \mbox{span}\{e^{\lambda \xi}\} \quad \mbox{in}\quad H^s(-\infty,\xi_0)
\end{equation}
where $\mbox{Id}$ denotes the identity operator and $s=4$ if $\tau>0$ and $s=2$ if $\tau=0$. The assumption (\ref{lin-ass}) follows if 
Theorem~\ref{uniqueness} is true. Notice that the problem is invariant under translation, so we can take $\xi_0=0$ without loss of generality 
in order to show (\ref{lin-ass}).

\section{Proof of Theorem~\ref{uniqueness}}
We work in the Hilbert space
\[
H_0^2(-\infty,0) =\{ v\in H^1(-\infty,0): \ v(0)=0\}\cap H^2(-\infty,0)\,.
\]
We need two lemmas. First, we find a way of writing the potential in ${\cal D^\alpha}$ as an integral (see \cite{LiebLoss}):
\begin{lem}\label{h:conv}
Let $\beta>-1$, then there exists a function 
$H\in C_c^\infty(\mathbb{R})$ such that
\[
\int_0^\infty t^{\beta} H(t)dt=1\,.
\]
Then by choosing
\[
H(t)=  \int_{\mathbb{R}} h(t-r)\,h(r)\, dr
\]
for all $t\in \mathbb{R}$ where $h\in C_c^\infty(\mathbb{R})$ is an even function, one can write
\[
|x|^{-(\beta+1)}=\int_0^\infty t^\beta H(tx)dt\,.
\]
Moreover, for any $\xi$, $y\in \mathbb{R}$ and $a>0$, we have
\begin{equation}\label{power}
|\xi-y|^{-a}=\int_0^\infty t^a \int_{\mathbb{R}} h(t(z-\xi)) \, h(t(z-y))\,dz\,dt\,.
\end{equation}
\end{lem}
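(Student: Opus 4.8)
The plan is to follow the classical Lieb--Loss construction of the one--dimensional Riesz--potential representation and then specialise it. First I would fix any $\chi\in C_c^\infty(\mathbb{R})$ with $\chi\geq 0$, $\chi\not\equiv 0$ and $\operatorname{supp}\chi\subset(0,\infty)$, and set $h_0(s):=\chi(s)+\chi(-s)$, an even, non-negative, compactly supported smooth function. The key identity is
\[
\int_0^\infty t^\beta(h_0*h_0)(t)\,dt=\tfrac12\int_{\mathbb{R}}|t|^\beta(h_0*h_0)(t)\,dt=\tfrac12\iint_{\mathbb{R}^2}|s+r|^\beta h_0(s)h_0(r)\,ds\,dr ,
\]
which follows from the evenness of $h_0*h_0$ and the substitution $t=s+r$. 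The last integral is finite (since $h_0$ is bounded with compact support and $|s+r|^\beta$ is locally integrable for $\beta>-1$) and strictly positive (its integrand is non-negative and positive on a non-empty open set). Rescaling $h:=c\,h_0$ for a suitable $c>0$ we may assume $\int_0^\infty t^\beta(h*h)(t)\,dt=1$, and then $H:=h*h$ is an even function in $C_c^\infty(\mathbb{R})$ with $\int_0^\infty t^\beta H(t)\,dt=1$; this proves the first assertion and realises $H$ in the announced form. (In the only range used below, $\beta=a-1\in(-1,0)$, one may instead note $\iint|s+r|^\beta h(s)h(r)\,ds\,dr=c_\beta\!\int|k|^{-(1+\beta)}|\hat h(k)|^2\,dk$ with $c_\beta>0$, which is positive for every non-trivial even $h$.)

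For the scaling identity, I would fix $x\neq 0$ and substitute $s=tx$ in $\int_0^\infty t^\beta H(tx)\,dt$. For $x>0$ this gives at once $x^{-(\beta+1)}\int_0^\infty s^\beta H(s)\,ds=x^{-(\beta+1)}$. For $x<0$ the substitution sweeps $s$ over $(-\infty,0)$, and using that $H$ is even, so that $\int_{-\infty}^0|s|^\beta H(s)\,ds=\int_0^\infty s^\beta H(s)\,ds=1$, one obtains $|x|^{-(\beta+1)}$. Hence $|x|^{-(\beta+1)}=\int_0^\infty t^\beta H(tx)\,dt$ for every $x\neq 0$.

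For the double-integral form, I would fix $\xi\neq y$ and $t>0$, substitute $w=t(z-\xi)$ in the inner integral, and use that $h$ is even to get
\[
\int_{\mathbb{R}}h(t(z-\xi))\,h(t(z-y))\,dz=\frac1t\int_{\mathbb{R}}h(w)\,h\bigl(w+t(\xi-y)\bigr)\,dw=\frac1t\,(h*h)\bigl(t(\xi-y)\bigr)=\frac1t\,H\bigl(t(\xi-y)\bigr).
\]
Thus the right-hand side of (\ref{power}) equals $\int_0^\infty t^{a-1}H(t(\xi-y))\,dt$, which by the previous step with $\beta=a-1>-1$ and $x=\xi-y$ is exactly $|\xi-y|^{-a}$. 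All integrals occurring here are absolutely convergent, because $H$ (hence $h*h$ and $|h|*|h|$) is bounded with compact support and $t^{a-1}$ is integrable near $0$, so the interchanges of integration are harmless; the diagonal $\xi=y$ is negligible.

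The computations are entirely routine; the one step that needs thought is the construction of $H$, where one must arrange $H$ to be \emph{simultaneously} smooth, compactly supported, even, and normalised by $\int_0^\infty t^\beta H=1$ — the tension being that evenness forces mass of $H$ near $t=0$, precisely where $t^\beta$ is singular. Recognising $\int_0^\infty t^\beta(h*h)(t)\,dt$ as half of the quadratic form with the manifestly non-negative kernel $|s+r|^\beta$ (or, for $\beta\in(-1,0)$, using the positivity of the Fourier symbol of $|\cdot|^\beta$) removes this obstruction immediately.
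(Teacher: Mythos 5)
Your proof is correct and is exactly the standard Lieb--Loss computation that the paper itself relies on: the paper states this lemma without proof, citing \cite{LiebLoss}, and your argument (normalising $h*h$ via the positivity of $\iint |s+r|^\beta h_0(s)h_0(r)\,ds\,dr$, the scaling substitution using evenness of $H$, and the change of variables $w=t(z-\xi)$ reducing the inner integral to $t^{-1}H(t(\xi-y))$) supplies precisely the missing details. No gaps.
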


We can now show the following key result:
\begin{lem}\label{good:sign}
Let $v\in H_0^2(-\infty,0)$, then the integral $I[v]$ in (\ref{key}) is well-defined and is non-negative. 
Moreover, $I[v]$ is zero if and only $v\equiv 0$.
\end{lem}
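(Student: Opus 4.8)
The plan is to turn the one-sided integral $I[v]$ into a two-sided integral over $\mathbb{R}^2$ and then, using Lemma~\ref{h:conv}, to display it as a superposition of perfect squares. Since $v\in H_0^2(-\infty,0)$ we have $v\in H^1(-\infty,0)$ with $v(0)=0$, so the extension $\tilde v$ of $v$ by $0$ on $(0,\infty)$ lies in $H^1(\mathbb{R})$, and $w:=\tilde v'\in L^2(\mathbb{R})$ is exactly the extension of $v'$ by $0$. On the domain of integration in (\ref{key}) one has $\xi-y>0$, hence $(\xi-y)^{-\alpha}=|\xi-y|^{-\alpha}$, and since the integrand is symmetric in $\xi$ and $y$, symmetrising gives
\[
I[v]=\frac12\int_{\mathbb{R}}\int_{\mathbb{R}}\frac{w(\xi)\,w(y)}{|\xi-y|^{\alpha}}\,dy\,d\xi\,.
\]
Finiteness is already visible here: for a.e.\ $\xi<0$ the inner integral in (\ref{key}) equals $d_\alpha^{-1}{\cal D}^\alpha\tilde v(\xi)$ by (\ref{FD}), and since ${\cal D}^\alpha$ maps $H^1(\mathbb{R})$ boundedly into $H^{1-\alpha}(\mathbb{R})\subset L^2(\mathbb{R})$, the Cauchy--Schwarz inequality yields $|I[v]|\le d_\alpha^{-1}\|v'\|_{L^2(-\infty,0)}\,\|{\cal D}^\alpha\tilde v\|_{L^2(\mathbb{R})}<\infty$, so $I[v]$ is well-defined.

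For the sign I would substitute the representation (\ref{power}) with $a=\alpha$ into the displayed double integral and exchange the order of integration: writing $h_t(x):=h(tx)$, so that $\int_{\mathbb{R}}w(\xi)\,h(t(z-\xi))\,d\xi=(w*h_t)(z)$, taking the $\xi,y$-integration inside gives
\[
I[v]=\frac12\int_0^\infty t^\alpha\int_{\mathbb{R}}\bigl((w*h_t)(z)\bigr)^2\,dz\,dt=\frac12\int_0^\infty t^\alpha\,\|w*h_t\|_{L^2(\mathbb{R})}^2\,dt\ \ge\ 0\,,
\]
the inner integrand being a perfect square. That the right-hand side is finite --- which re-proves well-definedness --- follows from Plancherel: $\|w*h_t\|_{L^2(\mathbb{R})}^2=\int_{\mathbb{R}}|\hat w(k)|^2\,t^{-2}|\hat h(k/t)|^2\,dk$, and carrying out the $t$-integration produces a positive constant times $\int_{\mathbb{R}}|k|^{\alpha-1}|\hat w(k)|^2\,dk=\int_{\mathbb{R}}|k|^{\alpha+1}|\hat{\tilde v}(k)|^2\,dk$, which is finite because $\tilde v\in H^1(\mathbb{R})$ and $\alpha<1$.

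Finally, the equality case. If $I[v]=0$ then, the integrand above being non-negative, $w*h_t=0$ in $L^2(\mathbb{R})$ for a.e.\ $t>0$, i.e.\ $\hat w(k)\,\hat h(k/t)=0$ for a.e.\ $(k,t)$. For each fixed $k\neq0$ the map $t\mapsto\hat h(k/t)$ is real-analytic and not identically zero --- by Paley--Wiener $\hat h$ extends to a non-trivial entire function, since $h\in C_c^\infty(\mathbb{R})$ and $h\not\equiv0$ (otherwise $H=h*h\equiv0$ and $\int_0^\infty t^\beta H(t)\,dt=0\neq1$) --- hence it has only isolated zeros; so $\hat w(k)=0$ for a.e.\ $k$, whence $w\equiv0$, so $v$ is constant on $(-\infty,0)$, and $v(0)=0$ forces $v\equiv0$; conversely $I[0]=0$. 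The step I expect to demand the most care is the interchange of integrations giving the perfect-square formula: the symmetrised double integral $\iint_{\mathbb{R}^2}|w(\xi)\,w(y)|\,|\xi-y|^{-\alpha}\,d\xi\,dy$ need not be finite for a general $w\in L^2(\mathbb{R})$, so Tonelli is not directly available for the triple integral as written. I would circumvent this by interchanging one $t$-slice at a time --- for fixed $t>0$ everything is absolutely convergent, since $\bigl\||w|*|h_t|\bigr\|_{L^2(\mathbb{R})}\le\|w\|_{L^2}\|h_t\|_{L^1}<\infty$, which legitimises the inner interchange and produces the non-negative number $\|w*h_t\|_{L^2}^2$ --- and then passing to the $t$-integral by dominated convergence over truncations $t\in(\varepsilon,N)$, using the finiteness of $\int_0^\infty t^\alpha\|w*h_t\|_{L^2}^2\,dt$ established above. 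Alternatively, one may skip the real-variable bookkeeping entirely: $|\cdot|^{-\alpha}$ has Fourier transform a positive multiple of $|k|^{\alpha-1}$, so $I[v]=c_\alpha\,\|\tilde v\|_{\dot H^{(1+\alpha)/2}(\mathbb{R})}^2$ with $c_\alpha>0$, which is $\ge0$ and vanishes exactly when $\tilde v\equiv0$; the argument through Lemma~\ref{h:conv} is presumably preferred here precisely because it stays elementary.
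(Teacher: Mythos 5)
Your proof is correct, and its core --- symmetrising $I[v]$ over $\mathbb{R}^2$, inserting the representation (\ref{power}) from Lemma~\ref{h:conv}, and reading off the superposition of squares $\tfrac12\int_0^\infty t^\alpha\|w*h_t\|_{L^2}^2\,dt$ --- is exactly the route the paper takes. The differences are at the edges, and they are worth recording. For well-definedness the paper extends $v$ by \emph{odd reflection} to $v^\ast\in H^2(\mathbb{R})$ and bounds $\|{\cal D}^\alpha v\|_{L^2(-\infty,0)}\le\|v^\ast\|_{\dot H^\alpha(\mathbb{R})}\le\|v^\ast\|_{H^1(\mathbb{R})}$; your extension by zero does the same job, both resting on the causality of ${\cal D}^\alpha$. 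For the equality case the paper argues that $F*h_t=0$ forces $F\equiv0$ because $h_t$ acts as a mollifier as $t\to\infty$; your Fourier-side argument (real-analyticity of $\hat h$, isolated zeros) reaches the same conclusion and is arguably more robust, since the mollifier argument tacitly needs $\int h\neq 0$, which the normalisation in Lemma~\ref{h:conv} does not by itself guarantee. Most importantly, your worry about the interchange of integrals is well-founded: the paper simply invokes ``Fubini--Tonelli'' for the quadruple integral, but the required absolute convergence amounts to $\iint|F(\xi)|\,|F(y)|\,|\xi-y|^{-\alpha}\,d\xi\,dy<\infty$, and since $\alpha<1$ this is \emph{not} implied by $F\in L^2$ (Hardy--Littlewood--Sobolev pairs $L^2$ with $L^2$ only when the kernel exponent is $1$; oscillatory $v$ with $|v'(\xi)|\sim|\xi|^{-\epsilon}$ for $1/2<\epsilon<1-\alpha/2$ stay in $H^2_0(-\infty,0)$ yet defeat absolute convergence in the far field). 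Your slice-by-slice repair is the right idea, though, as you yourself anticipate, the final limit $\varepsilon\to0$ in the $t$-truncation still needs care, because the natural dominating function is again the non-integrable $|F(\xi)|\,|F(y)|\,|\xi-y|^{-\alpha}$. The cleanest way to close this is your own concluding remark: prove the identity first for $F\in L^1\cap L^2$, where everything converges absolutely, and pass to the limit on the Fourier side; this simultaneously yields $I[v]=c_\alpha\|\tilde v\|_{\dot H^{(1+\alpha)/2}(\mathbb{R})}^2$ with $c_\alpha>0$, from which non-negativity and the rigidity statement are immediate.
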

\begin{proof}
We first observe that $I[v]$ is well-defined. First using the Cauchy-Schwarz inequality and that $\alpha\in(0,1)$, it follows that
\begin{equation}\label{CS}
\left|\int_{-\infty}^0 v'(\xi){\cal D}^\alpha v(\xi) d\xi\right |\leq \|v'\|_{L^2(-\infty,0)}\| D^\alpha v\|_{L^2(-\infty,0)}\,.
\end{equation}
We now use the reflection operator ${\cal E}: H_{0}^2(-\infty,0)\to H^2(\mathbb{R})$, 
\[
{\cal E}[u](x):=u^\ast(x)=\begin{cases} u(x) \quad \mbox{if} \ x\leq 0,\\ -u(-x) \quad \mbox{if} \ x> 0,
\end{cases}
\]
so that $\|u^\ast\|_{L^2(\mathbb{R})}^2=2\|u\|_{L^2(-\infty,0)}^2$. Then,
\[ 
\|{\cal D}^{\alpha}v \|_{L^2(-\infty,0)}^2\leq \|{\cal D}^\alpha v^\ast \|_{L^2(\mathbb{R})}^2=\|v^\ast\|_{\dot H^\alpha(\mathbb{R})}^2\leq \|v^\ast\|_{H^1(\mathbb{R})}^2= 2 \|v\|_{H^1_0(-\infty,0)}^2<\infty\,.
\]
This and ( \ref{CS}) imply that $I[v]$ is well-defined. 

In order to determined the sign of $I[v]$, we first write (\ref{key}) over integrals on $\mathbb{R}$:
\[
I[v]=\int_{-\infty}^0\int_{y}^0  \frac{v'(\xi)v'(y)}{(\xi-y)^{\alpha}}\,d\xi\, dy=\frac{1}{2}\int_{\mathbb{R}}\int_{\mathbb{R}} \frac{\left[v'(\xi)\theta(-\xi)\right]  \left[v'(y)\theta(-y)\right]}{ |\xi-y|^{\alpha}}\,d\xi\, dy
\]
where $\theta$ denotes the Heaviside function. Let us, for simplicity of notation, write $F(x)=v'(x)\theta(-x)$. 
Then, by expressing the potential according to Lemma~\ref{h:conv} (\ref{power}), we obtain 
\[
I[v]=  \frac{1}{2}\int_{\mathbb{R}} \int_{\mathbb{R}} F(\xi)F(y) \int_0^\infty t^{\alpha}\int_{\mathbb{R}} h(t(z-\xi))\,h(t(z-y))\,dz\,dt \,dy\,d\xi
\]
and by Fubini-Tonelli Theorem we have that 
\begin{eqnarray*}
  I[v] 
  & = & \frac{1}{2} \int_0^\infty t^{\alpha} \int_{\mathbb{R}} \left( \int_{\mathbb{R}} F(\xi) h(t(z-\xi))d\xi\right)^2\,dz\,dt\geq 0\,.
\end{eqnarray*}

Now, if $I[v]=0$ then $F\ast h_t=0$ almost everywhere, where $h_t(x)=h(tx)$. 
Since $h$ has compact support, $h_t$ acts as a mollifier as $t\to \infty$ and it is not hard to show that then $F\equiv 0$ (\cite{LiebLoss}). 
Recalling that $F(x)=v'(x)\theta(-x)$ with $v(0)=0$ then $v\equiv 0$.  
\end{proof}

\begin{proof}[Proof of Theorem~\ref{uniqueness}]
First let us prove the uniqueness of solutions of (\ref{LTW}) in $H^2(-\infty,0)$ for a given data in $\xi=0$. 
This is equivalent to proving that the only solution of (\ref{LTW}) in $H^2_0(-\infty,0)$ is $v\equiv 0$; 
indeed, if $v_1$ and $v_2$ are two solutions of (\ref{LTW}) with $v_1(0)=v_2(0)$, then $v=v_1-v_2$ satisfies
\begin{equation}\label{eq:v}
\begin{cases}
 h'(\phi_-) v = {\cal D}^\alpha v +\tau v''\\
v(0)=0\,.
\end{cases}
\end{equation}

Testing (\ref{eq:v}) with $v'\in H^1(-\infty,0)$ and integrating with respect to $\xi$ we obtain:
\[
0= \frac{h'(\phi_-)}{2} v(0)^2= \int_{-\infty}^0 v'(\xi){\cal D}^\alpha v(\xi) d\xi  + \frac{\tau}{2} v'(0)^2 
\] 
and Lemma~\ref{good:sign} implies that $v\equiv 0$.

It is easy to see, just by a straight computation, that the exponential functions $Ce^{\mu\xi}$ with $\mu$ being a zero of (\ref{pol:left}) satisfy (\ref{LTW}). 
 If $\mu=\lambda$, then these exponential functions are the only solutions in $H^2(-\infty,0)$, by the uniqueness just established. On the other hand, 
since these functions are also solutions in $H^s(-\infty,0)$ with $s> 2$ and $H^s(-\infty,0)\subset H^2(-\infty,0)$, the result follows.
\end{proof}

\paragraph{Acknowledgements}
The authors wish to thank Lukas Neumann for helpful comments. 
They also gratefully acknowledge the financial support 
of Vienna University of Technology via its International Office, the financial support of the Spanish MINECO through the project MTM2014-53145-P and the financail support of the Basque Government through the Research Group IT641-13.


\begin{thebibliography}{99}
\bibitem{ACH} F. Achleitner, C.~M. Cuesta, S. Hittmeir.
\newblock Travelling waves for a non-local Korteweg-de Vries-Burgers equation
\newblock{\em J. Differential Equations}, 257: 720--758, 2014.

\bibitem{AHS} F. Achleitner, S. Hittmeir, C. Schmeiser.
\newblock On nonlinear conservation laws with a nonlocal diffusion term. 
\newblock{\em J. Differential Equations}, 250(4): 2177--2196, 2011.


\bibitem{Krein:1958}
M.~G. Kre{\u\i}n. 
\newblock Integral equations on the half-line with a kernel depending on the difference of the arguments.
\newblock{\em Uspehi Mat. Nauk.}, 13(5):3--120, 1958.


\bibitem{LiebLoss} Elliott H.~Lieb and M.~Loss.
\newblock {\em Analysis}. 
\newblock Graduate Studies in Mathematics, 14. American Mathematical Society, Providence, RI, 1997.

\bibitem{Wiener+Hopf:1931}
N. Wiener and E. Hopf.
\newblock 
{\"U}ber eine Klasse singul{\"a}rer Integralgleichungen.  
\newblock{\em Sitz.Ber.Preuss.Akad.Wiss.Berlin}, XXXI: 696-706, 1931.
\end{thebibliography}

\end{document}